\documentclass[reqno,a4paper]{amsart}

\usepackage{mathrsfs,amsmath,amssymb,graphicx}
\usepackage{stmaryrd}
\usepackage[unicode]{hyperref}
\usepackage{latexsym}
\usepackage{layout}
\usepackage[english]{babel}
\usepackage{color}
\usepackage{enumitem}
\usepackage{fancyvrb}
\usepackage{color}
\usepackage{amsmath,amscd}

\usepackage{subcaption}

\usepackage{transparent}
\usepackage{xifthen}

\usepackage{tikz}
\usetikzlibrary{calc}
\usetikzlibrary{matrix,arrows,decorations}

\theoremstyle{plain}
\newtheorem{theorem}{Theorem}[section]
\newtheorem{lemma}[theorem]{Lemma}

\theoremstyle{definition}
\newtheorem{definition}{Definition}[section]
\theoremstyle{remark}

\newcommand{\op}[1]{\operatorname{#1}}
\newcommand{\reg}{\op{reg}}
\newcommand{\sing}{{\rm sing}}

\newcommand{\Sta}[1]{{\Gamma_{#1}}}
\newcommand{\pair}{(M,\alpha)}

\newcommand{\Sone}{{\mathbf{S}^1}}
\newcommand{\Stwo}{{\mathbf{S}^2}}

\newcommand{\Sthree}{\mathbf{S}^3}
\newcommand{\Com}{{\mathbb{C}}}
\newcommand{\til}[1]{{\tilde{#1}}}
\newcommand{\ba}[1]{{\hat{#1}}}
\newcommand{\Fib}{\mathsf{S}}

\newcommand{\cout}[1]   {}


\numberwithin{equation}{section}
\numberwithin{figure}{section}

\title{Resolution of singular fibers of an $\Sone$-manifold}
 
\author{Yi-Sheng Wang}
\address{Institute of Mathematics, Academia Sinica, , Taipei City 106, Taiwan}
\email{yisheng@gate.sinica.edu.tw}

\date{\today}

\begin{document}

\thanks{}

\begin{abstract}
In this paper, we present a construction 
of resolution of discrete singular fibers of
a closed $5$-manifold that admits a locally free
$\Sone$-action, and prove its compatibility with 
the resolution of cyclic surface singularities
in the quotient space by the $\Sone$-action. 
\end{abstract}

\maketitle
 
\section{Introduction}\label{sec:intro}

$\Sone$-actions on odd-dimensional manifolds
naturally arise in the study of contact manifolds. 
In particular, any almost regular contact manifold
admits a locally free $\Sone$-action \cite{Tho:76}, \cite{Wad:75}. Recent research progress in the field of Sasakian
geometry further fosters the interest in odd-dimensional manifolds that admit an $\Sone$-action. Investigation along this line has been 
carried out, for instance, in \cite{Kol:05}, \cite{Kol:06},
and resulted in greater insight into the topology of Sasakian manifolds. 
As the structure of Sasakian $3$-manifolds
is now well-understood and classified \cite{Bel:01}, 
attention has been drawn to the classification 
of Sasakian $5$-manifolds. For a compact $5$-manifold, it is known that 
$M$ admits a Sasakian structure
if and only if it admits a quasiregular Sasakian
structure \cite{Ruk:95}. In the latter case, it 
admits a natural locally free $\Sone$-action, and
the quotient $M/\Sone$ 
has a K\"ahler orbifold structure (see \cite[Theorem $7.1.3$]{BoyGal:07})
with points in the singular locus 
corresponding to the fibers with non-trivial stabilizers in $M$.

Motivated by this, we consider
a closed $5$-manifolds $M$ that 
admits a locally free, effective $\Sone$-action $\alpha$
such that each orbit admits an $\Sone$-invariant complex
neighborhood (see paragraphs preceding Definition \ref{def:special_complex_S1_manifolds}), 
called a complex 
$\Sone$-manifold $(M,\alpha)$ hereafter; 
the quotient $M/\Sone$
is a complex surface with singularities.
The paper presents a construction of resolution of 
discrete singular fibers in $(M,\alpha)$, 
and shows that it is compatible with
the resolution for surface cyclic quotient
singularties in $M/\Sone$ 
given in \cite{Rei} (see also \cite{Rei:74}).



\begin{theorem}\label{teo:resolution}
Given a complex $\Sone$-manifold $(M,\alpha)$
with discrete singular fibers $\Fib_i$, $i=1,\dots,n$, 
there exists an $\Sone$-manifold $(M_\omega,\alpha_\omega)$ 
and disjoint 
$\Sone$-invariant $3$-dimensional subspace $P_{i}$ 
of $M_\omega$, $i=1,\dots, n$, such that
\begin{enumerate}
\item $\alpha_\omega$ is free; 
\item $M_\omega-\bigcup_{i=1}^n P_i$  
is $\Sone$-equivariantly diffeomorphic to $M-\bigcup_{i=1}^n \Fib_i$; 
\item $M_\omega/\Sone\rightarrow M/\Sone$ is the resolution of
$M/\Sone$ as a singular complex surface;
\item $P_i$ is the union $Q_{i1}\cup\cdots\cup Q_{ic(i)}$
with $Q_{ij}$  diffeomorphic to $\Stwo\times \Sone$,
$1\leq j<c(i)$, and $Q_{c(i)}$ diffeomorphic to $\Sthree$, and 
$Q_{ij}\cap Q_{ij'}$ an $\Sone$-invariant circle
in $Q_{ij}, Q_{ij'}$ when $j'= j+1$ and empty otherwise, 
\end{enumerate}
where $c(i)$ is the length of the singular fiber $\Fib_i$ (see Definition \ref{def:complexity_singular_fiber}).
\end{theorem}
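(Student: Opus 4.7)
The plan is to reduce to a local model at each singular fiber, perform the Hirzebruch--Jung--Reid resolution on the quotient, lift it to a free $\Sone$-manifold by choosing the right complex line bundle on the resolved surface, and finally glue back to obtain $M_\omega$ globally.

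I would first invoke the slice theorem in the complex $\Sone$-category to present a neighborhood of $\Fib_i$ as $\Sone\times_{\mathbb{Z}_{r_i}}\Com^2$, where $\mathbb{Z}_{r_i}$ acts by $\zeta\cdot(z_1,z_2)=(\zeta z_1,\zeta^{q_i}z_2)$ for some $q_i$ coprime to the stabilizer order $r_i$. The $\Sone$-quotient is the cyclic surface singularity $\Com^2/\mathbb{Z}_{r_i}(1,q_i)$; Reid's toric resolution $\til U_i\to\Com^2/\mathbb{Z}_{r_i}$ replaces the origin by a chain $E_{i1}\cup\cdots\cup E_{ic(i)}$ of rational curves with consecutive ones meeting transversely in a single point, and $c(i)$ agrees with the length of the Hirzebruch--Jung continued fraction of $r_i/q_i$.

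Next, I would build the free $\Sone$-manifold $\til W_i$ as the unit circle bundle of a complex line bundle $L_i$ on $\til U_i$ that extends the line bundle $L_i^\circ$ associated to the restricted principal $\Sone$-bundle on the smooth locus $\Com^2/\mathbb{Z}_{r_i}\setminus\{0\}$. Extensions across the exceptional divisor form an affine torsor over $\bigoplus_j\mathbb{Z}\cdot\mathcal{O}(E_{ij})$ and translate the degree vector $(\deg L_i|_{E_{i1}},\ldots,\deg L_i|_{E_{ic(i)}})$ by the image of the intersection matrix $B$ (of determinant $\pm r_i$). I would then show, via the standard continued-fraction identification $\mathbb{Z}^{c(i)}/\mathrm{image}(B)\cong\mathbb{Z}_{r_i}$, that the degree vector $(0,\ldots,0,\pm 1)$ sits in the correct coset for the class in $\mathbb{Z}_{r_i}$ prescribed by $L_i^\circ$; this selects a unique extension $L_i$ realizing these degrees. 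The concrete realization by an iterated equivariant weighted blow-up with prescribed weight sequence explains the subscript $\omega$ in $M_\omega$.

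Granting this, $Q_{ij}$ is by definition the preimage of $E_{ij}$ in $\til W_i$, which is a principal $\Sone$-bundle over $\CPone\cong\Stwo$ with Euler number $0$ for $j<c(i)$ and $\pm 1$ for $j=c(i)$; hence $Q_{ij}\cong\Stwo\times\Sone$, respectively $\Sthree$. Adjacent pieces meet in the $\Sone$-fiber over the node $E_{ij}\cap E_{i(j+1)}$, an invariant circle. Gluing the $\til W_i$ into $M\setminus\bigcup_i\Fib_i$ along the $\Sone$-equivariant identification of punctured neighborhoods then yields $M_\omega$ with a free action $\alpha_\omega$ and verifies (1)--(4). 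The main obstacle is the coset computation identifying $(0,\ldots,0,\pm 1)$ as the correct extension: it is a toric/continued-fraction calculation using the data $r_i/q_i=[b_{i1},\ldots,b_{ic(i)}]$, and is precisely what forces the topology of each $Q_{ij}$ to be $\Stwo\times\Sone$ or $\Sthree$ rather than some other lens space $L(n,1)$ with $|n|\geq 2$.
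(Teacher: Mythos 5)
Your route is genuinely different from the paper's. The paper works from the bottom up: it never mentions line bundles or Euler classes, but instead performs one explicit reduction step that trades a type $\typeoneone[\,]$--style fiber $\frac{1}{r}(1,a)$ for one of type $\frac{1}{a}(1,a_1)$ by gluing in a copy of $\Com\times\Sthree$ with an explicit $\Sone$-action (Section \ref{subsec:reduction}), inducts on the length of the continued fraction, reads off the topology of the $Q_{ij}$ from explicit chart descriptions $R_{j-1}\cup_{g_j}R_j^{\perp}$ (Lemma \ref{lm:chain_charts}), and only afterwards verifies compatibility with Reid's resolution by direct computation with the quotient maps (Section \ref{subsec:compatibility}). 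You work from the top down: resolve the quotient first, then lift by choosing an extension of the line bundle $L_i^{\circ}$ across the exceptional chain. Your approach makes assertion (3) true by construction and makes assertion (2) follow from the universal property of associated bundles (using $H^1(L(r_i,q_i);\mathbb{Z})=0$ to see the gluing is essentially canonical), which is cleaner than the paper's chart-by-chart verification. The torsor description of extensions via the exact sequence $H^2_c(\til U_i)\xrightarrow{B}H^2(\til U_i)\to H^2(\partial\til U_i)\cong\mathbb{Z}_{r_i}\to 0$ is also correct.

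However, there is a genuine gap exactly where you locate it, and it is not a routine step that can be deferred: the entire content of assertion (4) is the claim that the class of $L_i^{\circ}$ in $H^2(L(r_i,q_i);\mathbb{Z})\cong\mathbb{Z}^{c(i)}/\operatorname{image}(B)$ is represented by the degree vector $(0,\dots,0,\pm1)$, with the $\pm1$ in the \emph{last} slot. You assert this without computing either side. Note that (i) $L_i^{\circ}$ is the flat circle bundle on the link associated to the standard character of $\mathbb{Z}_{r_i}$, and its Euler class is \emph{some} generator of $\mathbb{Z}_{r_i}$, but which generator depends on $q_i$ and on orientation conventions; and (ii) under the standard identification $\mathbb{Z}^{c(i)}/\operatorname{image}(B)\cong\mathbb{Z}_{r_i}$ for a Hirzebruch--Jung chain, $e_{c(i)}$ and $e_1$ map to different units (roughly $1$ and $q_i$), so if the two computations do not match up one would instead land on a degree vector such as $(\pm1,0,\dots,0)$ or a non-sparse one, yielding a different arrangement or even lens spaces $L(n,1)$ with $|n|\ge 2$ among the $Q_{ij}$. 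Already for $r_i/q_i=5/2=[3,2]$ the classes of $(0,\pm1)$ and $(\pm1,0)$ in $\mathbb{Z}_5$ are distinct, so the claim is sensitive to this matching. The paper's inductive chart computation (the maps \eqref{eq:transition_to_second_resolution} and \eqref{eq:map_R_Rperp}, which exhibit $Q_j=R_{j-1}\cup_{g_j}R_j^{\perp}$ explicitly) is precisely the substitute for this calculation; your proof is incomplete until you carry out the corresponding toric/continued-fraction identification and verify the two generators agree.
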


We remark that the subspace $P_i$
is a reminiscence to the chain of $2$-spheres
in the plumbing construction in $4$-dimensions.

Notation and basic facts 
about $\Sone$-manifolds
are reviewed in Section \ref{sec:prelim},
and the proof of Theorem \ref{teo:resolution}
occupies Sections \ref{subsec:reduction}, \ref{subsec:topology_Pi}
and \ref{subsec:compatibility}.


\section{Preliminaries}\label{sec:prelim}
 
\subsection{$\Sone$-manifold}
Throughout the paper, $\Sone$
is the unit circle in the complex plane, 
and $(M,\alpha)$ denotes a closed smooth $5$-manifold $M$ 
equipped with a locally free, effective
$\Sone$-action $\alpha:\Sone\times M\rightarrow M$. 
Since $M$ is compact, $\alpha$ is proper and the stabilizer
of every point $p\in M$ is finite (i.e.\ almost free).
In particular, by the slice theorem \cite[Theorem $3.8$]{Pal:60} 
(see \cite[Theorem $5.6$]{Die:87}, \cite{Bre:72} and \cite{Dav:08}),
the orbit space $O:=M/\Sone$ is an orbifold \cite[Theorem $1.5$]{Car:19}.
Denote by $\pi$ the quotient map $M\rightarrow O$,
and call $(M,\alpha)$ an $\Sone$-manifold. 

Since $\alpha$ is effective, the principal orbit type
$M_{\reg}$ corresponds to points in $M$
with the trivial stabilizer. By the principal orbit type 
theorem \cite[Theorem $5.14$]{Die:87} (see also \cite[Theorem $1.32$]{Mei:03}), it is open, dense and connected; in addition, 
$M_{\reg}\rightarrow M_{\reg}/\Sone$
is a principle $\Sone$-bundle. 
\begin{definition}
The orbit $\Fib$ of a point $p\in M$ 
is called a regular fiber 
if $p\in M_{\reg}$; otherwise it is called a singular fiber.
\end{definition}

In general, $M-M_{\reg}$ is a union
of smooth submanifolds of $M$. 
The present note concerns mainly the case where
$M-M_{\reg}$ is a discrete, and hence finite, set.

Two $\Sone$-manifolds $(M,\alpha),(M',\alpha')$
are said to be $\Sone$-equivalent if there exists
a diffeomorphism between $M,M'$ that respects $\alpha,\alpha'$.
By the slice theorem,
each orbit $\Fib$ of $p$ 
in an $\Sone$-manifold $(M,\alpha)$ 
admits an $\Sone$-invariant neighborhood $U_{\Fib}$
which is $\Sone$-equivalent to
a model $\Sone$-manifold  
$V_p:=\Sone\times \mathbb{R}^4 / \rho_p$,
where $\rho_p$ is a faithful representation of 
the isotropy subgroup
$\Sta{p}$ of $p$ 
on the real vector space $\mathbb{R}^4$,
and the $\Sone$-action on $V_p$ is induced by
the $\Sone$-action on $\Sone$.
Note that   
$\rho_p$ is faithful 
since $\alpha$ is effective. 

The present paper 
concerns
the case where 
$\mathbb{R}^4$ is regarded as the complex vector space 
$\Com^2$ and $\rho_p$ is complex linear; we call such a
$V_p:=\Sone\times \Com^2 / \rho_p$ 
a model complex $\Sone$-manifold.


\begin{definition}\label{def:special_complex_S1_manifolds}
A special complex $\Sone$-manifold is an $\Sone$-manifold 
with $M_\sing:=M-M_{\op{reg}}$ discrete, 
and for every orbit $\Fib$,
there exists an $\Sone$-invariant neighborhood $U_{\Fib}$ 
which is $\Sone$-equivalent to
a model complex $\Sone$-manifold. 
\end{definition}

\begin{definition}\label{def:types_singular_fiber}
Given an orbit $\Fib$ of $p\in M_\sing$, 
we say it is a singular fiber of type $\frac{1}{r}(1,a)$, 
$r,a$ two positive coprime integers,  
if the stabilizer $\Sta{p}$ is isomorphic to $\mathbb{Z}_r=\langle g\rangle$
such that $\rho_p$ is conjugate to the representation 
\begin{align*}
\mathbb{Z}_r&\rightarrow \Com^2\\
g&\mapsto 
\begin{bmatrix}
e^{\frac{2\pi i}{r}}& 0\\
0& e^{2\pi i\frac{a}{r}}
\end{bmatrix}.
\end{align*}
\end{definition}

Up to conjugation,
a faithful representation $\rho$ 
of $\mathbb{Z}_r$ on $\Com^2$
is conjugate to 
\begin{align*}
\mathbb{Z}_r&\rightarrow \Com^2\\
g&\mapsto 
\begin{bmatrix}
e^{2\pi i\frac{a_1}{r_1}}& 0\\
0& e^{2\pi i\frac{a_2}{r_2}},
\end{bmatrix}
\end{align*}
for some positive integers $r_1,r_2$ with their least common multipler $r$, and $a_i, i=1,2$ integers coprime to $r_i, i=1,2$.

If $r_1,r_2<r$, then $(M,\alpha)$
has non-discrete set of singular fibers, that is, 
$M_\sing$ contains some $3$-dimensional submanifolds of $M$.
On the other hand, $M_\sing$ is discrete if and only if $r_1=r_2=r$. 
Up to a change of generator,
the representation $\rho$ can be normalized as follows:  
\begin{align*}
\mathbb{Z}_r&\rightarrow \Com^2\\
g&\mapsto 
\begin{bmatrix}
e^{2\pi i\frac{1}{r}}& 0\\
0& e^{2\pi i\frac{a}{r}},
\end{bmatrix}
\end{align*}
for some $a>0$ coprime to $r$.
Therefore, we have the following.
\begin{lemma}
Every discrete singular fiber is of type $\frac{1}{r}(1,a)$,
for some coprime postiive integers $r, a$.
\end{lemma}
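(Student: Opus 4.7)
The plan is to combine the slice theorem with the normal form for faithful linear representations of finite cyclic groups on $\Com^2$, essentially organizing the observations sketched in the paragraphs preceding the statement. By the slice theorem, a discrete singular fiber $\Fib$ through $p$ has an $\Sone$-invariant neighborhood $\Sone$-equivalent to a model complex $\Sone$-manifold $V_p = \Sone \times \Com^2 / \rho_p$, where $\rho_p$ is a faithful complex linear representation of the finite cyclic stabilizer $\Sta{p} \cong \mathbb{Z}_r$. Since every finite-order element of $\mathrm{GL}(2,\Com)$ is diagonalizable, I would choose a basis in which $\rho_p(g) = \mathrm{diag}(e^{2\pi i a_1/r_1}, e^{2\pi i a_2/r_2})$ with $\gcd(a_j, r_j) = 1$, and observe that faithfulness of $\rho_p$ forces $\mathrm{lcm}(r_1, r_2) = r$.

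The main step, which I expect to be the crux, is the implication $r_1 = r_2 = r$ from discreteness of $M_\sing$. Suppose for contradiction that $r_j < r$ for some $j$, say $j = 1$. Then the subgroup $H \leq \mathbb{Z}_r$ of index $r_1$ acts trivially on the first complex coordinate axis $L \subset \Com^2$, so the entire complex line $L$ is pointwise fixed by $H$. Under the quotient $\Sone \times \Com^2 \to V_p$ this produces a $3$-dimensional stratum in the singular locus of $V_p$, namely the image of $\Sone \times L$, contradicting the assumption that $M_\sing$ is discrete.

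Once $r_1 = r_2 = r$ is established, normalization of the representation is a short exercise in elementary number theory. Since $\gcd(a_1, r) = 1$, there is an integer $k$ with $k a_1 \equiv 1 \pmod{r}$, which is coprime to $r$ and hence gives a new generator $g^k$ of $\mathbb{Z}_r$. With respect to this generator the first eigenvalue becomes $e^{2\pi i/r}$, and the second takes the form $e^{2\pi i a/r}$ with $a \equiv k a_2 \pmod{r}$. Both $k$ and $a_2$ are coprime to $r$, so $a$ is coprime to $r$, and choosing the representative in $\{1,\dots,r-1\}$ yields a positive integer $a$ with $\gcd(a,r) = 1$. The resulting normal form matches Definition \ref{def:types_singular_fiber}, so $\Fib$ is a singular fiber of type $\tfrac{1}{r}(1,a)$, completing the proof.
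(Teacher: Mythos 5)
Your proposal is correct and follows essentially the same route as the paper, whose "proof" is the paragraph preceding the lemma: diagonalize the faithful $\mathbb{Z}_r$-representation, use discreteness of $M_\sing$ to rule out $r_j<r$ (since otherwise the image of $\Sone\times L$ gives a $3$-dimensional piece of the singular set), and renormalize the generator. You merely supply the details (the index computation for the subgroup fixing $L$ and the elementary number theory of the generator change) that the paper leaves implicit.
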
 
 
From now on, $\pair$ is assumed to be a speical complex
$\Sone$-manifold. 
\subsection{Neighborhood of a singularity}\label{subsec:neighborhoods}
In this subsection, we examine the $\Sone$-structure 
of a neighborhood of a singular fiber of $\frac{1}{r}(1,a)$ type.
Let $\Fib$ be a singular fiber, and $U_p$ 
an $\Sone$-invariant neighborhood of 
$\Fib$ which is $\Sone$-equivalent to a model complex $\Sone$-manifold
$\Sone\times \Com^2/\mathbb{Z}_r$,
where $\mathbb{Z}_r$, generated by $g$, acts on $\Com^2$
by the representation 
\begin{align*}
\rho: \mathbb{Z}_r&\rightarrow \Com^2\\
g&\mapsto 
\begin{bmatrix}
e^{2\pi i\frac{1}{r}}& 0\\
0& e^{2\pi i\frac{a}{r}},
\end{bmatrix}
\end{align*}
and the action of $\mathbb{Z}_r$ on $\Sone$
is given by letting $g=e^{2\pi i\frac{1}{r}}$.
%
The $\Sone$-action on $\Sone\times \Com^2/\mathbb{Z}_r$
is induced by the $\Sone$-action on the first factor of 
$\Sone\times \Com^2$.

Consider the local diffeomorphism 
\begin{align*}
\pi:\Sone\times \Com^2 &\rightarrow \Sone\times \Com^2\\
(w,z_1,z_2)&\mapsto (w^r,w^{-1}z_1, w^{-a} z_2) 
\end{align*}
which descends to a diffeomorphism from 
$\Sone\times \Com^2/\mathbb{Z}_r$
to $\Sone\times \Com^2$, and therefore,  
the neighborhood $U_\Fib$ is $\Sone$-equivalent to 
the $\Sone$-manifold $(\Sone\times \Com^2,\alpha_0)$
with $\alpha_0$ given by   
\begin{align}\label{eq:s1_action_nbhd_singular}
\Sone\times \Sone\times \Com^2 &\rightarrow \Sone\times \Com^2\\
(t,u,v_1,v_2)&\mapsto (t^r u,t^{-1}v_1, t^{-a} v_2)\nonumber.
\end{align}


\section{Resolution of singularties via charts}\label{sec:resolution_charts}

\subsection{Reduction on type $\frac{1}{r}(1,a)$ singular fibers}\label{subsec:reduction}
Let $\Fib$ be a singular fiber of $\pair$
$\Fib$. 
We measure its complexity by continued fractions as follows.
\begin{definition}
Suppose $\Fib$ is of type $\frac{1}{r}(1,a)$,
and $[b_1, \dots, b_n]$ is the continued fraction of $\frac{r}{a}$, namely 
\[\frac{r}{a}=b_1-\frac{1}{b_2-\frac{1}{b_3-\dots}}.\]
Then we define the length of the singular fiber to be $n$. 
\end{definition}\label{def:complexity_singular_fiber}
Note that the fraction can be calculated by 
the recursive formula:
\begin{align}
r&=a b_1-a_1, 0<a_1<a\nonumber\\
a&=a_1 b_2-a_2, 0<a_2<a_1\nonumber\\
a_i&=a_{i+1} b_{i+2}-a_{i+2}, 0<a_{i+2}<a_{i+1}\nonumber\\
&\dots \label{eq:bi_ai}.
\end{align}

Now we present a construction that reduces 
a type $\frac{1}{r}(1,a)$ singular fiber
to a type $\frac{1}{a}(1,a_1)$ singular fiber,
where
$\frac{a}{a_1}=[b_2,\dots, b_n]$ and
$\frac{r}{a}=[b_1,b_2,\dots, b_n]$.

Consider the five manifold $\Com\times \Sthree$ 
equipped with the $\Sone$-action 
\begin{align*}
\delta_a:\Sone\times (\Com\times \Sthree) &\rightarrow \Com\times \Sthree\\
(x,y_1,y_2)&\mapsto (t^r x,t^{-1}y_1,t^{-a}y_2),
\end{align*}
where $(x,y_1,y_2)\in \Com\times \Sthree\subset\Com^3$.

\begin{lemma}
$(\Com\times \Sthree,\delta_a)$ 
has only one singular fiber and it is of type $\frac{a_1}{a}$. 
\end{lemma}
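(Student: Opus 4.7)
The plan is to first locate every orbit with non-trivial stabilizer, and then compute the isotropy representation at a point of the (unique) singular orbit.

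For a point $(x,y_1,y_2)\in\Com\times\Sthree$, an element $t\in\Sone$ stabilizes it precisely when $t^{r}x=x$, $ty_1=y_1$, and $t^{a}y_2=y_2$. Because $|y_1|^{2}+|y_2|^{2}=1$, at least one of $y_1,y_2$ is non-zero. If $y_1\neq 0$ the second equation forces $t=1$. If $y_1=0$ then $|y_2|=1$ and we must have $t^{a}=1$; if additionally $x\neq 0$ then also $t^{r}=1$, and since $\gcd(r,a)=1$ we again conclude $t=1$. Therefore the only singular points are those of the form $(0,0,y_2)$ with $|y_2|=1$. Since $t\cdot(0,0,y_2)=(0,0,t^{-a}y_2)$ and $a>0$, the map $t\mapsto t^{-a}$ is onto $\Sone$, so these points form a single $\Sone$-orbit, and the stabilizer at every point of this orbit is exactly $\mathbb{Z}_a$.

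Next I would identify the isotropy representation at $p_0:=(0,0,1)$. The tangent space $T_{p_0}(\Com\times\Sthree)$ splits as $\Com_x\oplus\Com_{y_1}\oplus T_1\Sone$, where $T_1\Sone$ (the $i\mathbb{R}$-direction in the $y_2$-coordinate) is precisely the orbit direction at $p_0$. Hence a slice transverse to the orbit and invariant under $\mathbb{Z}_a$ is the $\Com_x\oplus\Com_{y_1}$ factor. A generator $g=e^{2\pi i/a}$ of $\mathbb{Z}_a\subset\Sone$ acts on this slice via $(x,y_1)\mapsto (g^{r}x,g^{-1}y_1)=(e^{2\pi i r/a}x,\,e^{-2\pi i/a}y_1)$.

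To put the representation in the normal form of Definition \ref{def:types_singular_fiber}, invoke the first line $r=ab_1-a_1$ of \eqref{eq:bi_ai}, which gives $r\equiv -a_1\pmod{a}$ and hence $e^{2\pi i r/a}=e^{-2\pi i a_1/a}$. Replacing the generator $g$ by $g^{-1}$ (also a generator since $\gcd(1,a)=1$) and swapping the two coordinates of the slice, the representation becomes $\mathrm{diag}\bigl(e^{2\pi i/a},\,e^{2\pi i a_1/a}\bigr)$. Since $\gcd(r,a)=1$ and $r\equiv-a_1\pmod a$ yield $\gcd(a_1,a)=1$, this is exactly the $\tfrac{1}{a}(1,a_1)$ model, completing the identification. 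The only real obstacle is the bookkeeping of these normalizations; the orbit analysis itself is mechanical.
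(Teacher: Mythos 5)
Your proof is correct, but it takes a different route from the paper. You work directly on $\Com\times\Sthree$: you classify stabilizers pointwise (using $|y_1|^2+|y_2|^2=1$ and $\gcd(r,a)=1$ to kill all stabilizers except along $\{0\}\times\{0\}\times\Sone$), and then read off the type from the isotropy representation of $\mathbb{Z}_a$ on the slice $\Com_x\oplus\Com_{y_1}$, normalizing via $r\equiv -a_1\pmod a$. The paper instead decomposes $\Com\times\Sthree$ into the two charts $X=\Com\times\Sone\times\Com$ and $X'=\Com\times\Com\times\Sone$ glued along \eqref{eq:gluing_map_XprimeX}, observes the action is free on $X$, and identifies $X'$ with an explicit model complex $\Sone$-manifold $\Com^2\times\Sone/\mathbb{Z}_a$ via the covering map \eqref{eq:tilXprime_to_Xprime}, with the same congruence $r=ab_1-a_1$ doing the arithmetic work. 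Your argument is more self-contained and arguably more elementary for this lemma alone; the paper's chart decomposition and explicit maps pay off later, since $X$, $X'$, $f$, and \eqref{eq:tilXprime_to_Xprime} are reused verbatim in Lemma \ref{lm:chain_charts} and in the comparison with Reid's four-dimensional resolution in Section \ref{subsec:compatibility}. Two small bookkeeping points in your write-up: the stabilizer equations should read $t^{-1}y_1=y_1$ and $t^{-a}y_2=y_2$ (equivalent to what you wrote, but worth matching the action $\delta_a$), and the stated type ``$\frac{a_1}{a}$'' in the lemma should be read as $\frac{1}{a}(1,a_1)$, which is what both you and the paper's proof actually establish.
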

\begin{proof}
Observe first that the manifold $\Com\times \Sthree$
can be obtained by gluing manifolds
$X:=\Com\times\Sone \times \Com $
and $X':=\Com\times\Com\times \Sone$
via the diffeomorphism
\begin{align}
X'\supset\Com\times \Com^\ast \times \Sone &\xrightarrow{f} \Com\times \Sone \times \Com^\ast\subset X\nonumber\\
(p',q_1',q_2')&\mapsto (p'\vert q_1'\vert,\frac{q_1'}{\vert q_1'\vert},\frac{q_2'}{\vert q_1'\vert}),\label{eq:gluing_map_XprimeX}
\end{align}
where $\Com^*=\Com-\{0\}$.
The manifold $\Com\times\Sthree$ can be identified with $X  \cup_f X'$ via the embeddings:
\begin{align}
\iota:X &\rightarrow \Com\times \Sthree\nonumber\\
(p,q_1,q_2)&\mapsto (p\vert q \vert,\frac{q_1}{\vert q\vert},
\frac{q_2}{\vert q\vert}),\label{eq:identification_X_ComSthree}\\
\iota':X'&\rightarrow \Com\times \Sthree\nonumber\\
(p',q_1',q_2')&\mapsto (p'\vert q'\vert,\frac{q_1'}{\vert q'\vert},\frac{q_2'}{\vert q'\vert}),\label{eq:identification_Xprime_ComSthree}
\end{align}
where $q=(q_1,q_2)$ and $q'=(q_1', q_2')$.
It is not difficult to check the embeddings
respect $f$, and hence induces a diffeomorphism 
between $X\cup_f X'$ and $\Com\times\Sthree$.

The $\Sone$-action $\delta_a$ on $\Com\times\Sthree$ 
restricts to $\Sone$-actions on $X,X'$ via \eqref{eq:identification_X_ComSthree}, \eqref{eq:identification_Xprime_ComSthree} as follows:
\begin{align*}
\kappa_a :\Sone \times X &\rightarrow X \nonumber\\
(t, p, q_1, q_2)&\mapsto (t^r p,t^{-1}q_1,t^{-a}q_2)
\\
\kappa_a':\Sone \times X'&\rightarrow X'\nonumber\\
(t, p', q_1', q_2')&\mapsto (t^r p',t^{-1}q_1',t^{-a}q_2').
\end{align*}
This shows $\Fib_0=\{(0,0)\}\times\Sone$
is a singular fiber of $(X',\kappa_a')$;
on the other hand, the $\Sone$-action $\kappa$ on $X$ 
is free since $q_1$ is never zero.

To determine the type of the singular fiber $\Fib_0$, consider the representation of $\mathbb{Z}_a=\langle g\rangle$ 
on $\Com^2$:
\begin{align*}
\rho: \mathbb{Z}_a&\rightarrow \Com^2\\
g&\mapsto 
\begin{bmatrix}
e^{2\pi i\frac{a_1}{a}}& 0\\
0& e^{2\pi i\frac{1}{a}},
\end{bmatrix}
\end{align*}
and the natural homomorphism 
\begin{align*}
\iota: \mathbb{Z}_a &\rightarrow \Sone\\
g&\mapsto e^{2\pi i\frac{1}{a}}. 
\end{align*} 
Equip $\til X=\Com^2\times \Sone$ with the $\Sone$-action
acting on the second factor.
Then the quotient $\Com^2\times\Sone/\mathbb{Z}_a$ given 
by $\rho,\iota$ is $\Sone$-equivalent to $(X',\kappa_a')$
via the map 
\begin{align}
\til X'=\Com^2\times \Sone &\rightarrow X'\nonumber\\
(\til p',\til q_1',\til q_2')&\mapsto 
((\til q_2')^{r}\til p',(\til q_2')^{-1}\til q_1',(\til q_2')^{-a})\label{eq:tilXprime_to_Xprime}.
\end{align} 
Note that the map sends the orbit of the $\mathbb{Z}_a$-action
to a point because $r=ab_1-a_1$. Thus by Definition \ref{def:types_singular_fiber},
the fiber is of type $\frac{1}{a}(1,a_1)$.
\end{proof}

Suppose $\Fib$ a singular fiber of $(M,\alpha)$ of type 
$\frac{1}{r}(1,a)$ with length $n$. 
Then by \eqref{eq:s1_action_nbhd_singular}, 
we can identify an $\Sone$-invariant neighborhood 
$U_\Fib$ of $\Fib$ with $\Sone\times\Com^2$ 
such that $\Fib=\Sone\times \{(0,0)\}$
and the $\Sone$-action on $U_\Fib$ is realized as follows:
\begin{align}
\Sone\times U_\Fib &\rightarrow U_\Fib\nonumber\\
(t,u,v_1,v_2)&\mapsto (t^ru,t^{-1}v_1,t^{-a}v_2).\label{eq:model_neighborhood_sing_fib}
\end{align} 

Consider the $\Sone$-equivariant embedding
\begin{align}
e:(U_\mathsf{S}-\mathsf{S})&\rightarrow
\Com\times \Sthree\nonumber\\
(u,v_1,v_2)&\mapsto (u\vert v\vert, \frac{v_1}{\vert v\vert},\frac{v_2}{\vert v\vert}),\label{eq:gluing_embedding}
\end{align}
where $v=(v_1,v_2)$, and glue 
the manifold $(M-\Fib,\alpha)$ and $(\Com\times\Sthree,\delta_a)$
via $e$. The resulting new special complex $\Sone$-manifold $(M',\alpha')$
has all singular fibers the same as $\pair$ except that 
the singular fiber $\mathsf{S}$ is now replaced with a (singular) fiber of smaller length. By induction,
we eventually get a special complex $\Sone$-manifold
$(M_\omega,\alpha_\omega)$ with $\alpha_\omega$ free, and
this proves the first assertion of Theorem \ref{teo:resolution}.

We remark that $e$ can be decomposed into the embeddings \eqref{eq:identification_X_ComSthree} and \eqref{eq:identification_Xprime_ComSthree}
in terms of $X,X'$ as follows: 
\begin{align}
U_\Fib\supset\Sone\times\Com^\ast \times\Com  &\rightarrow X\nonumber\\
(u,v_1,v_2)&\mapsto (u\vert v_1\vert,\frac{v_1}{\vert v_1\vert},\frac{v_2}{\vert v_1\vert})\label{eq:gluing_embedding1_5_dim}\\
U_\Fib\supset\Sone\times\Com \times\Com^\ast  &\rightarrow X'\nonumber\\
(u,v_1,v_2)&\mapsto (u\vert v_2\vert,\frac{v_1}{\vert v_2\vert},\frac{v_2}{\vert v_2\vert})\label{eq:gluing_embedding2_5_dim}.
\end{align}
\eqref{eq:gluing_embedding1_5_dim}, \eqref{eq:gluing_embedding2_5_dim} come in handy
when proving the rest assertions in Theorem \ref{teo:resolution}.

\subsection{The topology of the $3$-dimensional subspace $P_i$}\label{subsec:topology_Pi}
To understand the structure of $P_i$ in 
Theorem \ref{teo:resolution}, 
we note that without loss of generality, 
it may be assumed that $\pair$
has only one singular fiber $\Fib$. 
In particular, if $a=1$, 
$\alpha_\omega=\alpha'$, and in $M_\omega$, 
$U_\Fib$ is replaced with $X,X'$
and the singular fiber $\Fib$
replaced with the $3$-sphere, 
$P:=R\cup_f R'$, where
$R=0\times \Sone\times\Com, R'=0\times\Com\times\Sone$  
and $f$ is given in \eqref{eq:gluing_map_XprimeX}.
For the general case, we denote by 
$(X_i,\kappa_{a_i})$,  
$(X_i',\kappa_{a_i}')$ 
the $\Sone$-manifolds 
$\Com\times\Sone\times\Com$, $\Com\times\Com\times\Sone $
equipped with the 
$\Sone$-actions
\begin{align*}
\kappa_{a_i} :\Sone \times X_i &\rightarrow X_i \nonumber\\
(t, p, q_1, q_2)&\mapsto (t^{a_{i-1}} p',t^{-1}q_1',t^{-a_i})
\\
\kappa_{a_i}':\Sone \times X_i'&\rightarrow X_i'\nonumber\\
(t, p', q_1', q_2')&\mapsto (t^{a_{i-1}} p',t^{-1}q_1',t^{-a_i}q_2'),
\end{align*}
respectively, $i=0,\dots,n-1$,
with $n$ being the length of the singular fiber $\Fib$
and $a_{-1}=r,a_0=a$ and $a_i,i>0$ given by \eqref{eq:bi_ai}.
Then we have the following lemma.
\begin{lemma}\label{lm:chain_charts}
Let $R_i, R_i^\perp$ be
the $\Sone$-invariant subspaces $0\times\Sone\times\Com, 
\Com\times\Sone\times 0\subset X_i$, respectively, 
$i=0,\dots, n-1$,
and $R_{n-1}'$ the $\Sone$-invariant subspace 
$0\times\Com\times\Sone\subset X_{n-1}'$. 

Then there exist $\Sone$-equivariant diffeomorphisms 
$g_i$, $i=1,\dots, n-1$, from 
$\Com\times\Sone\times\Com^\ast\subset X_{i-1}$
to $\Com^\ast\times\Sone\times \Com \subset X_i$ sending
$(0,\lambda,\mu)\in 0\times \Sone\times \Com^\ast\subset R_{i-1}$ to $(\mu^{-1},\lambda,0)\in R_{i}^\perp$ such that
the resolution $(M_\omega,\alpha_\omega)$
is obtained by 
gluing $U_\Fib-\Fib$ to
\[X_\omega:=X_0\cup_{g_1} X_1\cup_{g_2} X_2\cup\cdots\cup_{g_{n-1}} X_{n-1}\cup_{g} X_{n-1}'\]
via an $\Sone$-equivariant embedding $e_\omega$
that restricts to 
\[U_\Fib-(\Sone\times 0\times\Com)=\Sone\times \Com^\ast\times \Com\rightarrow X_0\] 
given in
\eqref{eq:gluing_embedding1_5_dim}, 
and $g$ is the diffeomorphism in \eqref{eq:gluing1_topological_model}.
Furthermore, $U_\Fib-\Fib$
is $\Sone$-equivalent to $X_\omega-R_\omega$,
with  
\[R_\omega:=R_0\cup_{g_1} (R_1^\perp\cup R_1)\cup_{g_2} \cdots\cup_{g_{n-1}} (R_{n-1}^\perp\cup R_{n-1})\cup_{g} R_{n-1}'.\]
\end{lemma}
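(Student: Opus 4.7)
The plan is to argue by induction on the length $n$ of $\Fib$, with the single-step reduction of Section~\ref{subsec:reduction} serving as both the base case and the inductive engine. For $n=1$ the continued-fraction condition forces $a=1$, so $\Fib$ has type $\frac{1}{r}(1,1)$ and the reduction produces the already-free manifold $(\Com\times\Sthree,\delta_1)$. Here $X_\omega=X_0\cup_g X_0'$ together with $R_\omega=R_0\cup_g R_0'$ is simply the decomposition of $\Com\times\Sthree$ via \eqref{eq:identification_X_ComSthree}--\eqref{eq:identification_Xprime_ComSthree}, and the identification $U_\Fib-\Fib\simeq X_\omega-R_\omega$ is built into the reduction map $e$ of \eqref{eq:gluing_embedding}.

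For the inductive step, a single reduction replaces $U_\Fib-\Fib$ by $X_0\cup_f X_0'$ and produces in $X_0'$ a new singular fiber $\Fib'$ of type $\frac{1}{a}(1,a_1)$ and length $n-1$. The inductive hypothesis applied to $\Fib'$ inside $(X_0',\kappa_a')$ yields the chain $X_1\cup_{g_2}\cdots\cup_{g_{n-1}} X_{n-1}\cup_g X_{n-1}'$ together with an $\Sone$-equivariant embedding of $U_{\Fib'}-\Fib'$ into $X_1$ given by \eqref{eq:gluing_embedding1_5_dim} in the model coordinates $U_{\Fib'}\cong\Sone\times\Com^2$. The first gluing $g_1$ is constructed as a composition: on $\Com\times\Sone\times\Com^\ast\subset X_0$, apply $f^{-1}$ from \eqref{eq:gluing_map_XprimeX} to land in $X_0'$; pass via the inverse of \eqref{eq:tilXprime_to_Xprime} composed with the local diffeomorphism $\pi$ of Section~\ref{subsec:neighborhoods} into $U_{\Fib'}-\Fib'$; and finally apply \eqref{eq:gluing_embedding1_5_dim} to land in $\Com^\ast\times\Sone\times\Com\subset X_1$. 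Substituting the chain for $U_{\Fib'}$ in the single-step output then yields the claimed decomposition $X_\omega=X_0\cup_{g_1}X_1\cup\cdots\cup_g X_{n-1}'$ and the embedding $e_\omega$ with the prescribed restriction to $X_0$.

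The main obstacle will be the coordinate bookkeeping: each constituent map carries factors such as $|v_1|$, $(\til q_2')^r$, and $(\til q_2')^{-a}$, and the exponents of $t$ in the endpoint actions $\kappa_{a_{i-1}}$ and $\kappa_{a_i}$ must match through the composition. This works out precisely because of the recursion $a_{i-1}=a_ib_{i+1}-a_{i+1}$ from \eqref{eq:bi_ai}, which is the algebraic content of the reduction step. A clean strategy is to establish $\Sone$-equivariance abstractly from the equivariance of each constituent map, and only then compute the effect of $g_i$ on $(0,\lambda,\mu)\in R_{i-1}$; the reciprocal $\mu^{-1}$ in the target arises from the $|\mu|^{-1}$ factor introduced by $f^{-1}$. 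The last assertion $U_\Fib-\Fib\simeq X_\omega-R_\omega$ then follows by unraveling the induction, since at each stage $R_i^\perp\cup R_i$ is precisely the $3$-dimensional locus added by the $i$-th reduction as the replacement of the previous singular fiber, and $R_{n-1}'$ is the terminal such piece; removing all of them recovers $U_\Fib-\Fib$ equivariantly.
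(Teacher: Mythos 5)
Your proposal is correct and follows essentially the same route as the paper: induction on the length $n$, with the single-step reduction as the engine, the new chart $X_0'$ re-identified with a standard model neighborhood of the new singular fiber (the paper does this via the explicit normalization $(p',q_1',q_2')\mapsto((q_2')^{-1},q_1',p'(q_2')^{b_1})$ rather than through the inverse of \eqref{eq:tilXprime_to_Xprime}, which amounts to the same identification), and $g_1$ defined as the composition $f^{-1}$ followed by that normalization and the next gluing embedding, whose restriction to $0\times\Sone\times\Com^\ast$ gives $(0,\lambda,\mu)\mapsto(\mu^{-1},\lambda,0)$. The coordinate bookkeeping you flag is exactly what the paper's displayed composition \eqref{eq:transition_to_second_resolution} carries out.
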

Lemma \ref{lm:chain_charts} implies
the second and forth assertions of Theorem \ref{teo:resolution} where $P$ corresponds to $R_\omega$ here, and 
$Q_j:=R_{j-1}\cup_{g_j} R_{j}^\perp$, $j=1,\dots,n-1$,
and $Q_n:=R_{n-1}\cup_g R_{n-1}'$. 
It is then not difficult to check 
$Q_{n}$ is diffeomorphic to a $3$-sphere, 
$Q_j$ diffeomorphic to $\Stwo\times\Sone$, and 
$Q_{j}\cap Q_{j+1}$ the circle $R_j\cap R_j^\perp=0\times\Sone\times 0$,
$j=1,\cdots, n-1$.

\begin{proof}
We prove by induction. 
The case $n=1$ is clear, so we may assume $a>1$,
and we apply the resolution construction to 
the singular fiber $\Fib_0$ 
in $(M',\alpha')$. To do so, 
we observe first that 
$X'$ is an $\Sone$-invariant neighborhood
of $\Fib_0$, but the $\Sone$-action is not quite 
the same as $U_S$ in form, so we 
``normalize'' $X'$ 
by the $\Sone$-equivariant diffeomorphism
\begin{align}
X'&\rightarrow X''=\Sone\times\Com^2 \label{eq:normalize}\\
(p',q_1',q_2')&\mapsto ((q_2')^{-1},q_1',p'(q_2)^{b_1}).\nonumber
\end{align}
The induced $\Sone$-action on $X''$ is then given by 
\begin{align}
\Sone\times X''&\rightarrow X'' \\
(p'',q_1'',q_2'')&\mapsto (t^a p'',t^{-1}q_1'', t^{-a_1}q_2''). \nonumber
\end{align}

With the new coordinates, we can now 
apply the resolution construction to $X''$.
Take a copy of $\Com\times\Sthree$
and identify it with $X_1\cup_{f_{1}} X_1'$
as in \eqref{eq:identification_X_ComSthree},
\eqref{eq:identification_Xprime_ComSthree},
where $X_1,X_1'$
are copies of $X,X'$, respectively, and $f_1=f$.
Think of $X''$ as the union of $\Sone\times\Com^\ast\times\Com\cup\Sone\times\Com\times\Com^\ast$.
Then as in \eqref{eq:gluing_embedding1_5_dim} and \eqref{eq:gluing_embedding2_5_dim}, we have the gluing maps:  
\begin{align}
X''\supset\Sone\times\Com^\ast \times\Com  &\rightarrow X_1 \nonumber\\
(u,v_1,v_2)&\mapsto (u\vert v_1\vert,\frac{v_1}{\vert v_1\vert},\frac{v_2}{\vert v_1\vert})\nonumber
\\
X''\supset\Sone\times\Com \times\Com^\ast  &\rightarrow X'_1
\nonumber\\
(u,v_1,v_2)&\mapsto (u\vert v_2\vert,\frac{v_1}{\vert v_2\vert},\frac{v_2}{\vert v_2\vert}),
\label{eq:gluing_embedding_Xprimeprime}
\end{align}
and hence, the composition
\begin{equation}\label{eq:transition_to_second_resolution}
X_\ast\xrightarrow{f^{-1}} X'_\ast\xrightarrow{\eqref{eq:normalize}} X_\ast''\xrightarrow{\eqref{eq:gluing_embedding_Xprimeprime}} X_1,
\end{equation}
is given by the assignment
\[(p,q_1,q_2)\mapsto (q_2^{-1},q_1,p\vert q_2\vert^2(\frac{q_2}{\vert q_2\vert})^{b_1}),\]
where $X_\ast:=\Com\times\Sone\times\Com^\ast\subset X$,
$X_\ast':=\Com\times\Com^\ast\times\Sone\subset X'$,
and  
$X_\ast'':=\Sone \times\Com^\ast\times\Com\subset X''$.
The composition \eqref{eq:transition_to_second_resolution} restricts to the following assignment  
\begin{equation}\label{eq:map_R_Rperp}
(0,q_1,q_2)\mapsto (q_2^{-1},q_1,0)
\end{equation} 
on $0\times \Sone\times \Com^\ast$.

By induction, $g_i$, $i=2,\dots,n-1$
satisfying conditions required 
in Lemma \ref{lm:chain_charts},
and an embedding $e''$ from $X''-\Fib_0$
to 
\[X_\omega'':=X_1\cup_{g_2} X_2\cup_{g_3} \cdots\cup_{g_{n-1}} X_{n-1}\cup_{g} X_{n-1}',\]
such that the resolution of $\Fib_0$ in $(M',\alpha')$
is obtained by gluing $X_\omega''$ to $X''-\Fib_0$
via $e''$. Let $X_0:= X$,
and $g_1$ be the composition \eqref{eq:transition_to_second_resolution}. Then $e''$ and \eqref{eq:gluing_embedding1_5_dim}
induce a gluing embedding from $U_\Fib-\Fib$ to  
\[X_\omega:=X_0\cup_{g_1} X_1\cup_{g_2}\cdots\cup_{g_{n-1}} X_{n-1}\cup_{g} X_{n-1}'.\]
Furthermore, by \eqref{eq:map_R_Rperp}, we have 
\begin{multline*}
X_\omega\supset R_0\cup R_1\cup_{g_2} \cdots\cup_{g_{n-1}} (R_{n-1}^\perp\cup R_{n-1})\cup_{g} R_{n-1}'\\
= 
R_0\cup_{g_1}(R_1^\perp \cup R_1)\cup_{g_2} \cdots\cup_{g_{n-1}} (R_{n-1}^\perp\cup R_{n-1})\cup_{g} R_{n-1}',
\end{multline*}
and there the claim.
\end{proof}

Note that if $M$ is simply connected, then $M_\omega$ is diffeomorphic to $M\# n (\Sthree\times \Stwo)$.

\subsection{Compatibility with resolution in $4$-dimensions}\label{subsec:compatibility}
Given a complex $2$-dimensional orbifold $O$ 
and let $p$ be an isolated singularity of type 
$\frac{1}{r}(1,a)$.
Then based on Reid's model \cite[]{Rei}, 
one can resolve the singularity
by replacing a neighborhood $\ba U_p$
of $p$ with a $4$-manifold given by gluing $n+1$ copies of 
$\Com^2$, denoted by $Y_0,\dots,Y_n$. 
The gluing maps $f_i$ between $Y_i$, $i=0,\dots,n$, 
is given by  
\begin{align*}
f_i:Y_i-(\Com\times 0) &\rightarrow Y_{i+1}\\
(\xi_i,\eta_i)&\mapsto (\eta_i^{-1},\xi_{i}(\eta_i)^{b_i}),
& i=0,\dots,n-1,
\end{align*} 
and the embedding from $\ba U_p-p\simeq \Com^2-\{(0,0)\}$ to $Y_1\cup_{f_1}\cup\cdots\cup_{f_{n-1}}Y_n$ given by 
\begin{align}
\ba U_p-V_0&\rightarrow Y_0\nonumber\\
(v_1,v_2)&\mapsto (v_1^r,v_2v_1^{-a})\nonumber\\
\ba U_p-V_1&\rightarrow Y_1\nonumber\\
(v_1,v_2)&\mapsto (v_2^{-1}v_1^a, v_1^r(v_2{v_1}^{-a})^{b_1})\nonumber\\
\cdots \nonumber\\
\cdots \label{eq:reid_embeddings}
\end{align}
allows us to glue $\ba U_p-\{p\}$ with 
$Y_0\cup_{f_0} Y_1\cup_{f_1}\cdots\cup_{f_{n-1}}Y_n$, 
where $V_j$ is 
$\Com\times 0$, $0\times \Com$ or their union 
depending on the assignments in \eqref{eq:reid_embeddings}, 
and $\frac{r}{a}=[b_1, \dots, b_n]$ is the fraction expression
of $\frac{r}{a}$.

Reid's model can be 
viewed as a reduction where we 
first reduce the singularity to a singularity of 
type $\frac{1}{a}(1,a_1)$ by gluing
$U_p-p$ with $Y\cup_f (Y'/\mathbb{Z}_a)$, where
$Y,Y'$ are two copies of $\Com^2$ 
with $\mathbb{Z}_a$ acting on $Y'$ by 
\[(\zeta_1',\zeta_2')\mapsto (\epsilon^{a_1}\zeta_1',\epsilon\zeta_2'),\quad \epsilon=e^{2\pi i/a}.\]

To find the gluing map $f$, 
we apply the embedding in \eqref{eq:reid_embeddings} to $Y''\simeq Y'$ with
\begin{align*}
Y'&\xrightarrow{\sim} Y''\\
(\zeta_1',\zeta_2')&\mapsto (\zeta_2',\zeta_1'),
\end{align*}
and identify $Y$ with $Y_0$.\footnote{The use of $Y''$ 
comes in handy when later we compare 
it with the resolution of the singular fiber of $\pair$.}
In particular, this implies the equality:
\begin{equation}
\big((\zeta_2')^a, \zeta_1'(\zeta_2')^{-a_1}\big) 
=\big(v_2^{-1}v_1^a, v_1^r(v_2v_1^{-a})^{b_1}\big).
\end{equation}
The gluing map $f$
can then be expressed in terms of coordinates as follows:
\begin{align*}
f:(Y'- 0\times \Com) &\rightarrow Y \\
(\zeta_1',\zeta_2')&\mapsto \big((\zeta_2')^r\zeta_1',(\zeta_2')^{-a}\big).
\end{align*}
Similarly, we can write down the embedding
\[\big(U_p-\{(0,0)\}\big)\rightarrow  Y\cup_f (Y'/\mathbb{Z}_a)\] 
as follows:
\begin{align}
(U_p-0\times \Com) &\rightarrow Y,\nonumber \\
(v_1,v_2)&\mapsto (v_1^r,v_2v_1^{-a}),\label{eq:gluing_embedding_1_dim_4}\\
(U_p-\Com\times 0) &\rightarrow Y', \nonumber\\
(v_1,v_2)&\mapsto (v_2^{\frac{r}{a}},v_1v_2^{-\frac{1}{a}}).\label{eq:gluing_embedding_2_dim_4}
\end{align}
Note the last map involves fraction,
so it is well-defined only after quotient the 
$\mathbb{Z}_a$-action.

Observe that the resolution $O'$ of $O$
of the singular point $p$ is topologically equivalent to
the manifold
\begin{equation}\label{eq:resolution_topological_model}
\big((O-\mathring{D_U})/\mathbb{Z}_r\big)\cup_h \big(D_Y\cup_f( D_Y'/\mathbb{Z}_a)\big),
\end{equation}
where $D_U,D_Y,D_Y'$ are the product of two unit disks in $U_p,Y,Y'$, respectively,
with the gluing map $h$ induced by 
the map 
\[\partial D_U=\Sone\times D^2\cup D^2\times\Sone\rightarrow \Sone\times D^2\cup_f (\Sone\times D^2/\mathbb{Z}_a)\] 
given by the assignments 
\begin{align}
\Sone\times D^2&\rightarrow \Sone\times D^2\subset D_Y\nonumber\\
(v_1,v_2)&\mapsto (v_1^r,v_2v_1^{-a})
\label{eq:gluing1_topological_model}\\
D^2\times \Sone&\rightarrow \Sone\times D^2\subset D_Y'\nonumber\\
(v_1,v_2)&\mapsto (v_2^{\frac{r}{a}},v_1v_2^{-\frac{1}{a}})
\label{eq:gluing2_topological_model}
\end{align}
and $f$ induced by the map
\begin{align}
D_Y'\supset D^2\times \Sone &\rightarrow  D^2\times \Sone\subset D_Y\nonumber\\
(\zeta_1',\zeta_2')&\mapsto (\zeta_1'(\zeta_2')^r,(\zeta_2')^{-a}).\label{eq:gluing3_topological_model}
\end{align} 
 
To see the resolution given in Section \ref{subsec:reduction}
descends to the resolution for complex orbifolds topologically.
We recall the map \eqref{eq:tilXprime_to_Xprime} 
from $\til X'$ to $X'$:
\[
(\til p',\til q_1',\til q_2')\mapsto 
((\til q_2')^{r}\til p',(\til q_2')^{-1}\til q_1',(\til q_2')^{-a}),
\] 
and denote it by $\pi'$. Similarly, 
we have an $\Sone$-equivariant map $\pi$ from 
$\til X=\Com\times \Sone\times \Com$ to $X$ given by 
\begin{equation}\label{eq:tilX_to_X}
(\til p,\til q_1,\til q_2)\mapsto 
((\til q_1)^{r}\til p,\til q_1^{-1}, \til q_2\til q_1^{-a}),
\end{equation}
where the $\Sone$-action acting
on the second factor of $\til X$.
Note $\pi$ is an $\Sone$-equivariant diffeomorphism, 
whereas $\pi'$ is not a diffeomorphism.
These two maps together with the gluing map $f$
in \eqref{eq:gluing_map_XprimeX} give us the composition
$\pi^{-1}\circ f\circ \pi'$ which in terms of the coordinates
of $\til X'$ can be written down as follows:
\begin{equation}\label{eq:gluing_tilXprime_to_tilX}
(\til p',\til q_1', \til q_2')\mapsto \big(\til p'\vert\til q_1'\vert 
(\frac{\til q_1'}{\vert \til q_1'\vert})^r, (\til q_1')^{-1}\til q_2',(\til q_1')^{-a}\vert \til q_1'\vert^{a-1}\big).
\end{equation}

Quotient out the $\Sone$-action, 
we obtain that $\ba X=\til X/\Sone$, $\ba X'=\til X'/\Sone$, and that the gluing map \eqref{eq:gluing_tilXprime_to_tilX} descends to the following map 
\begin{align*}
\ba X'\supset\Com\times\Com^\ast&\rightarrow \Com\times \Com^\ast\subset \ba X\\
(\til p',\til q_1')&\mapsto (\til p'\vert\til q_1'\vert
(\frac{\til q_1'}{\vert \til q_1'\vert})^r, (\til q_1')^{-a}\vert \til q_1\vert^{a-1}).
\end{align*} 
In particular, when restricted to $D^2\times\Sone\subset \ba X'$, 
it yields precisely the assignment in \eqref{eq:gluing3_topological_model}
since now $\vert \til q_1'\vert=1$. 

Now, we want to show 
the gluing map $e$ in \eqref{eq:gluing_embedding} 
descends to 
the gluing maps \eqref{eq:gluing_embedding_1_dim_4} and
\eqref{eq:gluing_embedding_2_dim_4} in $4$-dimensions.  
As with the case of $X,X'$, we consider a
model complex $\Sone$-manifold $\til U_\Fib\simeq \Sone\times\Com^2$ for $U_\Fib$, 
as in \eqref{eq:s1_action_nbhd_singular}:
\begin{align}
\til U_\Fib  &\rightarrow U_\Fib\nonumber\\
(\til u,\til v_1,\til v_2)&\mapsto (\til u^r,\til u^{-1}\til v_1, \til u^{-a} \til v_2).\label{eq:model_neighborhood_U_S} 
\end{align}
Compose the maps \eqref{eq:model_neighborhood_U_S}, 
\eqref{eq:gluing_embedding1_5_dim}, and \eqref{eq:gluing_embedding2_5_dim}, and combine
them with the maps $\pi:\til X\rightarrow X,\pi':\til X'\rightarrow X'$, we obtain the assignments:
\begin{align*}
\til U_\Fib\supset\Sone\times\Com^\ast\times\Com  
& \rightarrow \til X\\
(\til u,\til v_1,\til v_2)&\mapsto (\vert \til v_1\vert(\frac{\til v_1}{\vert \til v_1\vert})^r,\vert \til v_1\vert\til v_1^{-1}\til u, \til v_2 \til v_1^{-a}\vert\til v_1\vert^{a-1})\\
\til U_\Fib\supset\Sone\times\Com\times\Com^\ast 
& \rightarrow \til X'\\
(\til u,\til v_1,\til v_2)&
\mapsto (\vert \til v_2\vert(\frac{\til v_2}{\vert \til v_2\vert})^\frac{r}{a}, \vert \til v_2\vert^{\frac{1}{a}-1}\til v_2^{-\frac{1}{a}}\til v_1, 
\til v_2^{-\frac{1}{a}} \vert\til v_2\vert ^{\frac{1}{a}} \til u),
\end{align*}  
which descend to the following assignments 
after quotient out the $\Sone$-action:
\begin{align}
\Com^\ast\times\Com  
& \rightarrow \ba X\nonumber\\
(\til v_1,\til v_2)&\mapsto (\vert \til v_1\vert(\frac{\til v_1}{\vert \til v_1\vert})^r, \til v_2 \til v_1^{-a}\vert\til v_1\vert^{a-1})\label{eq:quotient_gluing_embedding1}\\ 
\Com\times\Com^\ast 
& \rightarrow \ba X'\nonumber\\
(\til v_1,\til v_2)&\mapsto 
(\vert \til v_2\vert(\frac{\til v_2}{\vert \til v_2\vert})^\frac{r}{a}, \vert \til v_2\vert^{\frac{1}{a}-1}\til v_2^{-\frac{1}{a}}\til v_1).\label{eq:quotient_gluing_embedding2}
\end{align}  
The assignment \eqref{eq:quotient_gluing_embedding1}
restricts to the following on $\Sone\times D^2$: 
\begin{align*}
\Sone \times D^2  
& \rightarrow \Sone\times D^2\subset \ba X\nonumber\\
(\til v_1,\til v_2)&\mapsto ( \til v_1^r, \til v_2 \til v_1^{-a}),
\end{align*} 
while the assignment \eqref{eq:quotient_gluing_embedding2} the following on $D^2\times \Sone$:
\begin{align*}
D^2 \times\Sone  
& \rightarrow \Sone\times D^2\subset \ba X'\nonumber\\
(\til v_1,\til v_2)&\mapsto ( \til v_2^\frac{r}{a},  \til v_1\til v_2^{-\frac{1}{a}}).
\end{align*} 
They are those in \eqref{eq:gluing1_topological_model} and \eqref{eq:gluing2_topological_model}, respectively,
so $M'/\Sone$ is homeomorphic to $O'$, and hence, by induction, 
the third assertion of Theorem \ref{teo:resolution}; this
completes the proof.


\section*{Acknowledgment}
The author would like to thank Prof. Shu-Cheng Chang
for introducing him to the topic and helpful discussions
during the preparation of the manuscript.


\end{document}